\documentclass[10pt,a4paper]{article}
\usepackage[margin=1.1cm]{geometry}
\usepackage{amsmath,amsthm,amsfonts,amssymb,amscd,cite,graphicx,array}

\usepackage{titlesec}
\titleformat{\section}
{\normalfont\fontsize{12}{15}\bfseries}{\thesection}{1em.}{}

\newtheorem{corollary}{Corollary}[section]

\newtheorem{theorem}{Theorem}[section]

\allowdisplaybreaks[4]

\let\oldbibliography\thebibliography
\renewcommand{\thebibliography}[1]{%
  \oldbibliography{#1}%
  \setlength{\itemsep}{-2pt}%
}

\baselineskip=1.20in

\usepackage[
  colorlinks   = true, 
  urlcolor     = blue, 
  linkcolor    = blue, 
  citecolor    = blue 
]{hyperref}

\newcommand\oeis[1]{\href{https://oeis.org/#1}{#1}}

\def\ge{\geqslant}
\def\leq{\leqslant}
\def\geq{\geqslant}

\DeclareUnicodeCharacter{03C6}{$\phi$}
\DeclareUnicodeCharacter{03B1}{$\alpha$}
\DeclareUnicodeCharacter{03B2}{$\beta$}

\begin{document}

\baselineskip=0.20in

\makebox[\textwidth]{%
\hglue-15pt
\begin{minipage}{0.6cm}	
\vskip9pt
\end{minipage} \vspace{-\parskip}
\begin{minipage}[t]{6cm}
\footnotesize{ {\bf Discrete Mathematics Letters} \\ \underline{www.dmlett.com}}
\end{minipage}
\hfill
\begin{minipage}[t]{6.5cm}
\normalsize {\it Discrete Math. Lett.}  {\bf X} (202X) XX--XX
\end{minipage}}
\vskip36pt

\noindent
{\large \bf Lattice paths with a first return decomposition constrained by the  maximal height of a pattern}\\

\noindent
Jean-Luc Baril$^{1,}\footnote{Corresponding author (barjl@u-bourgogne.fr)}$,
Sergey Kirgizov$^{1}$\\

\noindent
\footnotesize $^1${\it LIB, Université de Bourgogne Franche-Comté,
  B.P. 47 870, 21078 Dijon Cedex France}\\

\noindent
 (\footnotesize Received: Day Month 202X. Received in revised form: Day Month 202X. Accepted: Day Month 202X. Published online: Day Month 202X.)\\

\setcounter{page}{1} \thispagestyle{empty}

\baselineskip=0.20in

\normalsize

\begin{abstract}
\noindent  We consider the system of equations
  $A_k(x)=p(x)A_{k-1}(x)(q(x)+\sum_{i=0}^k A_i(x))$ for $k\geq r+1$
  where $A_i(x)$, $0\leq i \leq r$, are some given
  functions and show
  how to obtain a close form for $A(x)=\sum_{k\geq 0}A_k(x)$.  We
  apply this general result to the enumeration of certain subsets of
  Dyck, Motzkin, skew Dyck, and skew Motzkin paths, defined
  recursively according to the first return decomposition with a
  monotonically non-increasing  condition relative to the
  maximal ordinate reached by an occurrence of a given pattern $\pi$.
  \\[2mm]
{\bf Keywords:} Enumeration, Dyck and Motzkin paths, first return decomposition, statistics, height, pattern.\\[2mm]
{\bf 2020 Mathematics Subject Classification:} 05A15, 05A19, 68R05.
 \end{abstract}

\baselineskip=0.20in

\section{Introduction and notations}
Let $\mathcal{A}$ be a combinatorial class, that is a collection of
similar objects (lattices, trees, permutations, words) endowed with a
size function $\ell$ whose values are non-negative integers, so that
the number $|\ell^{-1}(n)|$ of objects of a given size $n$ is
finite. With $\mathcal{A}$ we associate the generating
function
$$A(x)=\sum\limits_{a\in\mathcal{A}}x^{\ell(a)}=\sum\limits_{n\geq 0}|\ell^{-1}(n)|x^n,$$
which appears to be the main source of
interest for researchers working in enumerative combinatorics.

Given a function $s:\mathcal{A}\rightarrow\mathbb{N}$, called {\it
  statistic}, and an integer $k\geq 0$, we define the combinatorial
class $\mathcal{A}_k$ consisting of objects $a\in\mathcal{A}$ such
that $s(a)=k$, and associate the following generating function with
this class:
$$A_k(x)=\sum\limits_{a\in\mathcal{A}_k}x^{\ell(a)},\mbox{ which implies }A(x)=\sum\limits_{k\geq 0}A_k(x).$$

In three recent articles~\cite{Bar,Bars,flor}, the authors enumerate
particular subclasses of lattice paths (Dyck, Motzkin, $2$-Motzkin and
Schr\"oder paths) defined recursively according to the first return
decomposition with a condition on the path height.  More precisely, in
the first paper, they focus on the Dyck path set
$\mathcal{D}^{h,\geq}$ consisting of the union of the empty path with
all Dyck paths $P$ having a first return decomposition $P=U\alpha
D\beta$ satisfying the conditions:
\begin{equation}
  \begin{cases}
    \alpha,\beta\in \mathcal{D}^{h,\geq},\\
    h(U\alpha D)\geq h(\beta),\\
  \end{cases}
  \label{eq1}
\end{equation}
where $h$ is the {\it height statistic}, i.e. the maximal ordinate
reached by a path.  In these studies, the authors need to consider a
system of equations involving the generating functions $D_k(x)$,
$k\geq 0$, for the subset of paths $P\in \mathcal{D}^{h,\geq}$
satisfying $h(P)=k$:
\begin{equation}
D_k(x)=x\cdot D_{k-1}(x)\cdot\sum\limits_{i=0}^k D_i(x)\mbox{ for } k\geq 1
\label{eq2}
\end{equation}
anchored with initial condition $D_0(x)=1$.

They deduce algebraically a close form of the generating function
$D(x)=\sum_{k\geq 0}D_k(x)$ for $\mathcal{D}^{h,\geq}$ by solving
System~(\ref{eq2}), and prove that $\mathcal{D}^{h,\geq}$ is
enumerated by the Motzkin sequence, that is \oeis{A001006} in the
On-Line Encyclopedia of Integer Sequences (OEIS) \cite{Sloa}. The
second paper is a similar study in the context of Motzkin and
Schr\"oder paths.

Inspired by these two works, the motivation of the paper is to extend
such studies by investigating the solutions of a more general system
defined as follows.  For an integer $r\geq 0$ and two functions $p(x)$
and $q(x)$, we consider the system of equations:

\begin{equation}
A_k(x)=p(x)A_{k-1}(x)\left(q(x)+\sum\limits_{i=0}^k A_i(x)\right)\mbox{, for } k\geq r+1,
\label{eq3}
\end{equation}
anchored with $A_i(x)=u_i(x)$ for $0\leq i \leq r$ where $u_i(x)$,
$0\leq i\leq r$, are some given functions. For the sake of
brevity, we set $u=u_r(x)$, $v=\sum_{i=0}^r u_i(x)-u$, $p=p(x)$ and
$q=q(x)$.

\medskip

Before considering how Equation (\ref{eq3}) is related to particular
sets of lattice paths defined in the same way as the set
$\mathcal{D}^{h,\geq}$, let us recall some definitions and notations.
A {\it skew Motzkin path} (see \cite{lu}) of length $n\geq 0$ is a lattice path in the
quarter plane starting at point $(0,0)$, ending on the $x$-axis (but
never going below), consisting of $n$ steps of the four following
types: up steps $U=(1,1)$, down steps $D=(1,-1)$, flat steps $F=(1,0)$
and left steps $L=(-1,-1)$, such that left and up steps never
overlap. Let $\mathcal{SM}$ be the set of all skew Motzkin paths of
any length. A {\it Motzkin path} is a skew Motzkin path with no left
steps.  Let $\mathcal{M}$ be the set of all Motzkin paths. A {\it Dyck
  path} is a Motzkin path with no flat steps. Let $\mathcal{D}$ be the
set of all Dyck paths. A {\it skew Dyck path} is a skew Motzkin path
with no flat steps. Let $\mathcal{SD}$ be the set of all skew Dyck
paths. Obviously, we have
$\mathcal{D}\subset\mathcal{SD}\subset\mathcal{SM}$ and
$\mathcal{D}\subset\mathcal{M}\subset\mathcal{SM}$.

A {\it pattern} of length $n\geq 1$ in a lattice path $P$ consists of
$n$ consecutive steps. The {\it height} of an {\it occurrence} of a
pattern in a given path is the maximal ordinate reached by its
points. For a pattern $\pi$, we define the statistic $h_\pi$ on
lattice paths so that $h_\pi(P)$ is the maximal height reached by the
occurrences of $\pi$ in $P$. The {\it amplitude} $r_\pi$ of a pattern
$\pi$ is the height of the path $\pi$ considered as a path touching
the $x$-axis in the quarter plane. For instance, the path
$P=UUDUUDUDDD$ contains the pattern $UUD$ and $h_{UUD}(P)=3$, while
the amplitude of $UUD$ is $r_{UUD}=2$.

\medskip

The paper is organized as follows. In Section 2, we show how we can
obtain a close form for $A(x)$ defined by the system (\ref{eq3}). In
Section 3, we provide several concrete applications in the context of
lattices paths (Dyck, Motzkin, skew Dyck, and skew Motzkin paths) for
the statistic $h_\pi$ giving the maximal ordinate reached by the
occurrence of $\pi$ in a path. Finally in Section 4, we provide some
possible future research directions.

\section{General result}

For $k\geq 0$, we set $B_k(x)=\sum_{i=0}^kA_i(x)$, and thus we have
$A(x)=\lim_{k\rightarrow\infty} B_k(x)$.  The
next theorem provides a close form of $A(x)$ whenever
Equation~(\ref{eq3}) admits a solution, that is when $1-A_k(x)p(x)\neq
0$ for all $k\geq r$.

\begin{theorem}
  \label{th1}
  If Equation (\ref{eq3}) has a solution then for $k\geq r+1$ we
  have \begin{equation}B_k(x)=\frac{a+bB_{k-1}(x)}{c+dB_{k-1}(x)},\label{eq4}\end{equation}
  where
$$\begin{cases}
a=p^2qv\left( q+u+v \right) -pqu-u-v\\
b=-p \left( {\it pq}+1 \right)  \left( q+v+u \right) \\
c=-p^2v\left( q+u+v\right)-qp-pv-1\\
d={p}^{2} \left( q+v+u \right) .
  \end{cases}$$
Moreover $A(x)=\lim\limits_{k\rightarrow\infty}
B_k(x)=\sum_{k=0}^\infty A_i(x)$ is a root of the
polynomial
\begin{equation} d\cdot A(x)^2+(c-b)\cdot
  A(x)-a.\label{eq5}
\end{equation}
\end{theorem}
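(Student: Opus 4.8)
The plan is to prove the closed form \eqref{eq4} by induction on $k\ge r+1$, and then to read off \eqref{eq5} by letting $k\to\infty$. Two elementary facts will be used repeatedly: $A_k=B_k-B_{k-1}$ for all $k$, and, since $B_k=B_{k-1}+A_k$, the defining relation \eqref{eq3} can be rewritten as $A_k(1-pA_{k-1})=pA_{k-1}(q+B_{k-1})$, so that the hypothesis $1-A_kp\ne0$ (for $k\ge r$) is exactly what makes every Möbius expression below well defined. For the base case $k=r+1$ I would use $A_r=u$ and $B_r=\sum_{i=0}^r u_i=u+v$; solving $B_{r+1}=B_r+pu\,(q+B_{r+1})$ gives $B_{r+1}=\dfrac{(u+v)+puq}{1-pu}$, and one checks by a finite computation that this equals $\dfrac{a+b\,(u+v)}{c+d\,(u+v)}$ for the stated $a,b,c,d$. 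This verification both anchors the induction and explains why the answer depends on the initial data only through $u=u_r$ and $v=B_{r-1}$.

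For the inductive step, assume \eqref{eq4} at level $k$ and invert it to obtain $B_{k-1}=\dfrac{a-cB_k}{dB_k-b}$. Then
$$A_k=B_k-B_{k-1}=\frac{d\,B_k^{2}+(c-b)B_k-a}{dB_k-b},$$
and it is worth noting that the numerator is precisely the polynomial \eqref{eq5} evaluated at $B_k$. Substituting this expression for $A_k$ into \eqref{eq3} written at level $k+1$, namely $A_{k+1}=pA_k(q+B_{k+1})$, and solving $B_{k+1}=B_k+A_{k+1}$ for $B_{k+1}$, produces a single Möbius function of $B_k$; the task is to confirm that it collapses to $\dfrac{a+bB_k}{c+dB_k}$.

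I expect this last confirmation to be the only real obstacle. After clearing denominators it becomes a polynomial identity in $B_k$ whose coefficients are polynomials in $p,q,u,v$, and it holds exactly because $a,b,c,d$ have the stated forms; conceptually, one is checking that the fixed birational map $(A_{k-1},B_{k-1})\mapsto(A_k,B_k)$ defined by \eqref{eq3} leaves invariant the curve $A=(dB^{2}+(c-b)B-a)/(dB-b)$, which is what reduces the two-dimensional recurrence to the one-dimensional Möbius iteration \eqref{eq4}. This elimination is routine but lengthy, and is naturally left to a symbolic computation.

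Finally, \eqref{eq5} follows painlessly. Because each step of \eqref{eq3} multiplies by $p(x)$ and raises the $x$-adic valuation, the $A_k(x)$ tend to $0$ and the partial sums $B_k(x)$ converge to $A(x)$ in the formal power series topology. Letting $k\to\infty$ in the identity $A_k=\bigl(d\,B_k^{2}+(c-b)B_k-a\bigr)/(dB_k-b)$, the left-hand side vanishes while $B_k\to A$, so the numerator must vanish in the limit, giving $d\,A(x)^{2}+(c-b)A(x)-a=0$, i.e. $A(x)$ is a root of \eqref{eq5}.
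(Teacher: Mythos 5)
Your proposal is correct and follows essentially the same route as the paper: both verify the base case $k=r+1$ directly from $B_r=u+v$, reduce the inductive step to a polynomial identity in $B$ (you parametrize by $B_k$ after inverting the M\"obius map, the paper by $S=B_{n-1}$) whose final verification is deferred to a routine symbolic simplification, and then obtain \eqref{eq5} by passing to the limit. The only cosmetic differences are that the paper derives $a,b,c,d$ by solving a small linear system from the first three iterates rather than checking the stated values against $B_{r+1}=\frac{(u+v)+puq}{1-pu}$, and that your limit argument uses $A_k\to 0$ in the identity $A_k=(dB_k^2+(c-b)B_k-a)/(dB_k-b)$ instead of taking limits in \eqref{eq4} directly; both are equivalent.
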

\begin{proof} Using $B_r(x)=u+v$, we solve the following linear system for $a,b,c,d$ 
$$\begin{cases}
B_{r+1}(x)(c+d B_r(x))  = a+bB_r(x)\\
B_{r+2}(x)(c+d B_{r+1}(x))  = a+bB_{r+1}(x)\\
B_{r+3}(x)(c+d B_{r+2}(x))  = a+bB_{r+2}(x),\\
\end{cases}
  $$
and obtain (modulo a multiplicative factor) the claimed values.
However, it remains to prove
that $$B_k(x)=\frac{a+bB_{k-1}(x)}{c+dB_{k-1}(x)}$$ for any $k\geq
r+1$.  We proceed by induction on $k$. Obviously, for $k=r+1$ it is
true. So, we assume that the formula is true for $k\leq n$ and we
prove that this is also true for $k=n+1$. More precisely, we will
prove that $R=(c+dB_n(x))B_{n+1}(x)-(a+bB_n(x))$ equals zero.
 
Considering Equation (\ref{eq3}), we have
$B_{n+1}(x)=\frac{pqA_n(x)+B_n(x)}{1-pA_n(x)}$. The recurrence
hypothesis provides $B_{n}(x)=\frac{a+bB_{n-1}(x)}{c+dB_{n-1}(x)}$,
and we have $A_n(x)=B_n(x)-B_{n-1}(x)$. Replacing respectively
$B_{n+1}(x)$, $B_n(x)$ and $A_n(x)$ in $R$, and multiplying it by $
1-p\cdot A_n(x)\neq 0$, we obtain
{\small$$
 \left( qp \left( {\frac {a+bS}{c+dS}}-S \right) +{\frac {a+bS}{c+dS}}
 \right)  \left( c+{\frac {d \left( a+bS \right) }{c+dS}} \right) -
 \left( a+{\frac {b \left( a+bS \right) }{c+dS}} \right)  \left( 1-p
 \left( {\frac {a+bS}{c+dS}}-S \right)  \right),
 $$}
where $S=B_{n-1}(x)$.
A simplification of this expression (whatever the value of $S$)
implies that $R=0$, which completes the induction.  Finally, taking
the limit when $k\rightarrow\infty$ of both sides of
$B_k(x)=\frac{a+bB_{k-1}(x)}{c+dB_{k-1}(x)}$, and using the fact that
$A(x)=\lim\limits_{k\rightarrow\infty} B_k(x)$, one gets
$$d\cdot A(x)^2+(c-b)\cdot A(x)-a=0.$$
Notice that an alternative proof can be obtained by using Chebyshev polynomials of the second kind (see Proposition D.5 p. 508 in \cite{mansou}).
\end{proof}


\section{Some applications}
In this section, we apply Theorem~\ref{th1} to study special kinds of
classes of lattices paths: Dyck, Motzkin, skew Dyck, and skew Motzkin
paths. We extend the definition~(\ref{eq1}) to these paths using the
maximal height reached by the occurrences of $\pi$, the statistic
$h_\pi$ instead of statistic $h$.  Notice that the statistic $h$
equals the statistic $h_U$ on all considered paths, which allows us to
focus only on $h_\pi$.  We will deal with patterns $\pi$ of length at most three.

Given a pattern $\pi$, we define the set $\mathcal{A}^{h_\pi,\geq}$ as
the union of the empty path with paths $P$ satisfying a recurrence
condition ($R$), which depends on the first return decomposition of a
path. The four cases are described in the following table with
$\alpha,\beta,\gamma \in \mathcal{A}^{h_\pi,\geq}$.

\begin{center}
\begin{tabular}{r|l}
  Paths & First return decomposition  with recurrence condition ($R$)\\[0.5pt]\hline
Dyck &\rule{0pt}{1.6em}%
$P=U\alpha D\beta$ \hspace{1.4em} with $h_\pi(U\alpha D)\geq h_\pi(\beta)$  \\[1ex]
Motzkin &
$\begin{cases}
          P=U\alpha D\beta \\
          P=F\gamma \\
\end{cases}$
          with $h_\pi(U\alpha D)\geq h_\pi(\beta)$ and $h_\pi(F)\geq h_\pi(\gamma)$\\[3.5ex]
Skew Dyck &
$\begin{cases}
          P=U\alpha D\beta \\
          P=U\alpha L \\
\end{cases}$
          $\mbox{with }\begin{cases} h_\pi(U\alpha D)\geq h_\pi(\beta) \\
          \mbox{ and } \alpha\neq \epsilon \mbox{ for the last case}\\  
          \end{cases}$
          \\[3.5ex]
Skew Motzkin &
$\begin{cases}
          P=U\alpha D\beta \\
          P=F\gamma \\
          P=U\alpha L \\
          P=U\alpha L F \gamma\\
\end{cases}$
          $\mbox{with }\begin{cases} h_\pi(U\alpha D)\geq h_\pi(\beta) \mbox{ and }h_\pi(F)\geq h_\pi(\gamma)\\
          \mbox{ and } \alpha\neq \epsilon \mbox{ for the last two cases}\\  
         \end{cases} $\\
\end{tabular}
\end{center}

We keep the notations used in the previous sections. The generating
functions $A_k(x)$, $k\geq 0$, correspond to the sets $\mathcal{A}_k$
of Dyck paths $P\in\mathcal{A}^{h_\pi,\geq}$ so that $h_\pi(P)=k$.  We set
$A(x)=\sum_{k\geq 0}A_k(x)$ and
$B_k(x)=\sum_{i=0}^kA_i(x)$.

In any case, the set $\mathcal{A}_0$ of paths
$P\in\mathcal{A}^{h_\pi,\geq}$ such that $h_\pi(P)=0$ is the set of
paths having no occurrence of $\pi$ at height at least one. In the
case where the amplitude of $\pi$ is at least one, $\mathcal{A}_0$
consists of the paths avoiding $\pi$. So, we can easily obtain
$A_0(x)$ using the first return decomposition of such paths. For Dyck
and Motzkin paths, these generating functions (for short g.f.) are
already known (see \cite{Bre,Deu, mansour, Man1, mansouri, Mer}).  For skew Dyck and
Motzkin paths, we can  obtain them by similar methods, which are
classic and present no difficulty. So, we do not give here all the
details to get $A_0(x)$.

Note that the maximal height of an occurrence of pattern $\pi$ in $P$
is necessarily greater or equal than its amplitude $r_\pi$. This means
that the generating functions for the sets $\mathcal{A}_k$, $1\leq k<
r_\pi$ satisfy $A_k(x)=0$. Finally, if $r_\pi\geq 1$ (which is
equivalent to $\pi\neq F^k$ for all $k\geq 1$) then
$\mathcal{A}_{r_\pi}$, is the set of paths $P\in
\mathcal{A}^{h_\pi,\geq}$ having at least one occurrence of $\pi$ and
such that $h_\pi(P)=r_\pi$, i.e. all occurrences of $\pi$ touch the
$x$-axis in $P$. Similar methods as for $A_0(x)$ can be used in order
to obtain $A_{r_\pi}(x)$. Theorems \ref{th3} and \ref{th4} focus on
two examples of these methods in the context of Dyck paths.

\subsection{Dyck and Motzkin paths}

First we consider Dyck paths. For a given pattern $\pi$, Condition
($R$) implies that for $k\geq r_\pi+1$ we have $$A_k(x)=x\cdot
A_{k-1}(x)\cdot\sum_{i=0}^k A_i(x)$$ which is a particular case of
Equation~(\ref{eq3}) for $r=r_\pi$, $p(x)=x$, $q(x)=0$,
$u=A_{r_\pi}(x)$, and $v=\sum_{i=0}^rA_i(x)-u=A_0(x)$.  From Theorem
\ref{th1}, we can easily deduce the following.
\begin{corollary}
\begin{equation}A(x)={\frac {{x}^{2}uv+{x}^{2}{v}^{2}-ux+1-
\sqrt {\Delta}}{2{x}^{
2} \left( v+u \right) }}\label{eq6}\end{equation}
with $\Delta={u}^{2}{v}^{2}{x}^{4
}+2\,u{v}^{3}{x}^{4}+{v}^{4}{x}^{4}-2\,{u}^{2}v{x}^{3}-2\,u{v}^{2}{x}^
{3}-3\,{u}^{2}{x}^{2}-6\,{x}^{2}uv-2\,{x}^{2}{v}^{2}-2\,ux+1$.
\end{corollary}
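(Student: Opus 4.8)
The plan is to obtain this directly from Theorem~\ref{th1} by substituting the specialization $p(x)=x$, $q(x)=0$, $r=r_\pi$, $u=A_{r_\pi}(x)$ and $v=A_0(x)$ identified just above, and then selecting the arithmetically correct root of the quadratic (\ref{eq5}). First I would set $q=0$ in the closed forms for $a,b,c,d$ supplied by Theorem~\ref{th1}. Every monomial carrying a factor of $q$ then disappears, and the four coefficients collapse to
$$a=-(u+v),\qquad b=-x(u+v),\qquad c=-x^2v(u+v)-xv-1,\qquad d=x^2(u+v).$$

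Next I would assemble the polynomial (\ref{eq5}). A short computation gives $c-b=-x^2v(u+v)+xu-1$ (the two $xv$ contributions cancel) and $-a=u+v$, so that by Theorem~\ref{th1} $A(x)$ is a root of
$$x^2(u+v)\,A(x)^2+\bigl(xu-1-x^2v(u+v)\bigr)A(x)+(u+v)=0.$$
Applying the quadratic formula, the quantity $-(c-b)=x^2uv+x^2v^2-ux+1$ is the numerator and $2d=2x^2(v+u)$ is the denominator appearing in (\ref{eq6}), so these already match the statement; the quantity under the radical is $(c-b)^2-4x^2(u+v)^2$, and expanding this term by term (routine but lengthy) is exactly what reproduces the displayed expression for $\Delta$.

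The only genuinely nontrivial point is the choice of sign in front of $\sqrt{\Delta}$. Since $A(x)=\sum_{k\geq 0}A_k(x)$ must be a formal power series, it has to stay finite as $x\to 0$, whereas the denominator $2x^2(v+u)$ vanishes to order two there; hence the numerator must vanish as well. Evaluating at $x=0$, where $v(0)=1$ (the empty path) and $u(0)=0$ (the paths counted by $A_{r_\pi}$ are nonempty), one gets $\Delta(0)=1$ and thus $\sqrt{\Delta}(0)=1$, so that only the minus branch makes the numerator vanish, the plus branch producing a pole at the origin. This forces the sign in (\ref{eq6}) and completes the argument. I expect the expansion of $(c-b)^2-4x^2(u+v)^2$ into the stated $\Delta$ to be the most tedious step, but it is purely mechanical; the substantive steps are the collapse of the coefficients under $q=0$ and the branch selection.
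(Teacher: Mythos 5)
Your proposal is correct and follows exactly the route the paper intends: the paper gives no explicit proof, stating only that the corollary is "easily deduced" from Theorem~\ref{th1}, and your substitution $p=x$, $q=0$ into the coefficients $a,b,c,d$, the resulting quadratic, and the expansion of $(c-b)^2-4x^2(u+v)^2$ all check out against the stated $\Delta$. The branch selection via $u(0)=0$, $v(0)=1$ is sound and is the one detail the paper leaves entirely implicit.
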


In order to convince the reader that all generating functions $A(x)$
can be obtained easily by calculating algebraically $u$ and $v$, we
give here the methods for the two patterns $UUD$ and $DUU$.

\begin{theorem} If $\pi=UUD$ then we have $r_\pi=2$, $u=A_2(x)={\frac {{x}^{2}}{ \left( x-1 \right)  \left( {x}^{2}+x-1 \right) }}$, $A_1(x)=0$, and $v=A_0(x)=\frac{1}{1-x}$.
\label{th3}
\end{theorem}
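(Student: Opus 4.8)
The plan is to prove the three claims separately, since they concern the pattern $\pi=UUD$ in the Dyck-path setting $\mathcal{D}^{h_\pi,\ge}$. The amplitude $r_\pi=2$ is immediate: the pattern $UUD$ drawn as a path touching the $x$-axis rises from height $0$ to height $2$ before the final descent, so its height is $2$. With $r_\pi=2$, the general discussion preceding the theorem already tells us that $A_k(x)=0$ for every $k$ with $1\le k<r_\pi$, which here forces $A_1(x)=0$. So the two genuine computations are for $A_0(x)=v$ and $A_2(x)=u$.

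First I would compute $A_0(x)$, the generating function for Dyck paths in $\mathcal{D}^{h_\pi,\ge}$ with $h_\pi(P)=0$. Since $r_\pi=2\ge 1$, the excerpt tells us that $\mathcal{A}_0$ is exactly the set of Dyck paths avoiding the pattern $UUD$. A Dyck path avoids $UUD$ precisely when it never contains two consecutive up steps followed by a down step; the standard way to see this is via the first return decomposition $P=U\alpha D\beta$. To avoid $UUD$, the factor $\alpha$ cannot begin with $U$, but a nonempty Dyck path always begins with $U$, so $\alpha$ must be empty, giving $P=UD\beta$ with $\beta$ again avoiding $UUD$. This yields the functional equation $A_0=1+xA_0$, whence $A_0(x)=\frac{1}{1-x}$, matching the claimed value $v=\frac{1}{1-x}$.

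The main work is the computation of $u=A_2(x)$, the g.f.\ for paths $P\in\mathcal{D}^{h_\pi,\ge}$ with $h_\pi(P)=2=r_\pi$, i.e.\ paths having at least one occurrence of $UUD$ and with every occurrence of $UUD$ touching the $x$-axis (reaching height exactly $2$, so its peak is at ordinate $2$ and its descent returns toward the axis). I would again use the first return decomposition $P=U\alpha D\beta$, and carefully translate ``every occurrence of $UUD$ has height $2$'' together with the membership condition $\alpha,\beta\in\mathcal{D}^{h_\pi,\ge}$ and $h_\pi(U\alpha D)\ge h_\pi(\beta)$ into constraints on $\alpha$ and $\beta$. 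The point is that any occurrence of $UUD$ inside $U\alpha D$ that sits above height $2$ is forbidden, which heavily restricts $\alpha$ (essentially $\alpha$ may only produce occurrences touching the axis at height $2$ relative to the lifted path, i.e.\ $\alpha$ itself must avoid $UUD$, so $\alpha\in\mathcal{A}_0$), while $\beta$ ranges over paths in $\mathcal{A}_0\cup\mathcal{A}_2$ compatible with the height inequality. Setting up the resulting functional equation in terms of the already-known $A_0(x)=\frac{1}{1-x}$ and the unknown $A_2(x)$ and solving it linearly should give the closed form $A_2(x)=\frac{x^2}{(x-1)(x^2+x-1)}$.

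The hard part will be the bookkeeping in the $A_2$ computation: correctly enumerating which decompositions $U\alpha D\beta$ yield a path all of whose $UUD$-occurrences are at height exactly $2$, and in particular tracking how the condition $h_\pi(U\alpha D)\ge h_\pi(\beta)$ interacts with the height constraint so that no occurrence is accidentally pushed to height $3$ or higher. Once the correct case analysis is fixed and the single linear equation for $A_2(x)$ is written down, solving it is routine, and one can verify the result by checking the first few Taylor coefficients against a direct count of small paths.
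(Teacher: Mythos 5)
Your treatment of $r_\pi=2$, of $A_1(x)=0$, and of $A_0(x)=\frac{1}{1-x}$ is correct and coincides with the paper's argument (the paper writes the $UUD$-avoiders as $P=UD\alpha$, which is exactly your observation that the first-return factor must be empty). The gap is in the $A_2$ computation, which you explicitly defer as ``the hard part'': you never write down the functional equation, and the one concrete constraint you do state --- ``$\alpha$ itself must avoid $UUD$, so $\alpha\in\mathcal{A}_0$'' --- is missing the essential condition $\alpha\neq\epsilon$. The paper's case analysis is that $P=U\alpha D\beta$ lies in $\mathcal{A}_2$ exactly when $\alpha\in\mathcal{A}_0\setminus\{\epsilon\}$ and $\beta\in\mathcal{A}_0\cup\mathcal{A}_1\cup\mathcal{A}_2$, giving
\[
A_2(x)=x\bigl(A_0(x)-1\bigr)\bigl(A_2(x)+A_1(x)+A_0(x)\bigr),
\]
which solves to $\frac{x^2}{(1-x)(1-x-x^2)}=\frac{x^2}{(x-1)(x^2+x-1)}$.

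The nonemptiness of $\alpha$ is not a cosmetic detail: $P$ must contain at least one occurrence of $UUD$ with $h_{UUD}(P)=2$, and since $\alpha$ avoids $UUD$ and any occurrence inside $\beta$ is only admissible when $h_\pi(U\alpha D)\geq h_\pi(\beta)=2$, the required occurrence must straddle the initial $U$ and the first two steps of $\alpha$. Your own computation of $A_0$ shows that every nonempty $UUD$-avoiding Dyck path begins with $UD$, so $\alpha\neq\epsilon$ is precisely what forces $P$ to start with $UUD$ and hence guarantees the occurrence at height $2$; conversely $\alpha=\epsilon$ yields $P=UD\beta$ with $h_{UUD}(U D)=0$, hence $\beta\in\mathcal{A}_0$ and $P\in\mathcal{A}_0$, not $\mathcal{A}_2$. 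If you allow $\alpha=\epsilon$ the equation becomes $A_2=xA_0(A_2+A_0)$, which gives $\frac{x}{(1-x)(1-2x)}$ instead of the claimed formula. So you need to supply both the exclusion of $\epsilon$ and the resulting linear equation to complete the proof.
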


\begin{proof}
  As already mentioned above, the set $\mathcal{A}_0$ is the set of
  Dyck paths avoiding the pattern $\pi$. So, a nonempty path $P\in
  \mathcal{A}_0$ is of the form $P=UD\alpha$ where
  $\alpha\in\mathcal{A}_0$, which implies that
  $A_0(x)=\frac{1}{1-x}$. On the other hand, the set $\mathcal{A}_1$
  is the set of Dyck paths $P\in\mathcal{A}^{h_\pi,\geq}$ such that
  $h_{UUD}(P)=1$, which cannot be possible because the amplitude
  $r_\pi=2$; so, $A_1(x)=0$. Finally, the set $\mathcal{A}_2$ is the
  set of Dyck paths $P\in\mathcal{A}^{h_\pi,\geq}$ such that
  $h_{UUD}(P)=2$, which means that $P$ can be written $U \alpha D
  \beta$ with $\alpha\in \mathcal{A}_0\backslash \epsilon$ and
  $\beta\in \mathcal{A}_2\cup\mathcal{A}_1\cup\mathcal{A}_0$. So, we
  have the functional equation $A_2(x)=x
  (A_0(x)-1)(A_2(x)+A_1(x)+A_0(x))$, which gives the expected result.
\end{proof}

\begin{theorem} If $\pi=DUU$ then we have $r_\pi=2$, $u=A_2(x)={\frac {{x}^{3} \left(1- x \right) }{ \left( 2\,x-1 \right) ^{2}}}$, $A_1(x)=0$, and $v=A_0(x)=\frac{x-1}{2x-1}$.
\label{th4}
\end{theorem}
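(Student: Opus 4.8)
The plan is to compute the three series $v=A_0(x)$, $A_1(x)$ and $u=A_2(x)$ separately, exactly as in the proof of Theorem~\ref{th3}; the only genuinely new feature is that $\pi=DUU$ \emph{starts} with a down step. Placing $\pi$ with its lowest point on the $x$-axis makes it visit the ordinates $1,0,1,2$, so its amplitude is $r_\pi=2$. In particular no occurrence of $\pi$ can reach height $1$, whence $\mathcal{A}_1=\emptyset$ and $A_1(x)=0$ immediately.

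To obtain $v=A_0(x)$ I would enumerate the $DUU$-avoiding Dyck paths through the first-return decomposition $P=U\alpha D\beta$. Any occurrence of $DUU$ inside the first arch $U\alpha D$ must already lie inside $\alpha$, since neither the initial $U$ nor the returning $D$ can participate in such an occurrence; the only occurrence the decomposition can create is at the seam, where the returning $D$ is followed by the first two steps of $\beta$. Hence $P$ avoids $DUU$ iff $\alpha$ avoids it, $\beta$ avoids it, and $\beta$ does not begin with $UU$. Writing $\bar A_0(x)$ for the series of $DUU$-avoiding Dyck paths that do not begin with $UU$, the decomposition yields the pair $A_0=1+x\,A_0\,\bar A_0$ and $\bar A_0=1+x\,\bar A_0$; the second gives $\bar A_0=\frac{1}{1-x}$, and back-substitution gives $A_0(x)=\frac{1-x}{1-2x}=\frac{x-1}{2x-1}$.

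For $u=A_2(x)$ I would describe $\mathcal{A}_2$ by the same decomposition. Since every occurrence of $\pi$ has height at least $2$ and $h_\pi(P)=2$, all occurrences in $P$ have height exactly $2$, i.e. each is a returning step $D$ (from ordinate $1$ to $0$) immediately followed by two up steps. Writing $P=U\alpha D\beta$, this forces $\beta$ to begin with $UU$; moreover $\alpha$ must avoid $\pi$, for an occurrence inside $\alpha$ would sit at height at least $3$ in $U\alpha D$, and then, because $h_\pi(U\alpha D)=0$, the recurrence condition $h_\pi(U\alpha D)\ge h_\pi(\beta)$ forces $\beta$ to avoid $\pi$ as well. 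Thus $\alpha\in\mathcal{A}_0$ is unrestricted while $\beta\in\mathcal{A}_0$ runs over the $DUU$-avoiders beginning with $UU$, counted by $A_0-\bar A_0$. This gives the functional equation
\[
A_2(x)=x\,A_0(x)\,\bigl(A_0(x)-\bar A_0(x)\bigr),
\]
into which one substitutes the closed forms above; a routine simplification then yields the closed-form expression for $u=A_2(x)$.

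The point on which I expect the argument to hinge --- and the real difference with the pattern $UUD$ of Theorem~\ref{th3} --- is that the height-$2$ occurrence of $DUU$ does not lie inside the first arch $U\alpha D$ but straddles the seam between that arch and $\beta$. Consequently the correct auxiliary statistic is whether a path begins with $UU$, which is precisely what $\bar A_0$ records, and (in contrast to the $UUD$ case, which requires $\alpha\neq\epsilon$) here $\alpha$ is allowed to be empty, since an empty $\alpha$ still leaves a genuine seam occurrence $D\,UU$. Once this bookkeeping is in place, every remaining step reduces to elementary manipulation of rational functions.
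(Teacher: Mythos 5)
Your computations of $r_\pi=2$, $A_1(x)=0$ and $A_0(x)=\frac{1-x}{1-2x}$ are correct and agree in substance with the paper (the paper phrases the $A_0$ recursion as $P=U\alpha D\beta$ with $\beta=(UD)^k$, and those $\beta$ are exactly your class counted by $\bar A_0=\frac{1}{1-x}$). The problem is the last step for $u=A_2(x)$: the functional equation you derive, $A_2=x\,A_0\,(A_0-\bar A_0)$, does \emph{not} simplify to the stated closed form. One gets $A_0-\bar A_0=\frac{x^2}{(1-x)(1-2x)}$ and hence $A_2=\frac{x^3}{(1-2x)^2}=x^3+4x^4+12x^5+\cdots$, whereas the theorem claims $\frac{x^3(1-x)}{(2x-1)^2}=x^3+3x^4+8x^5+\cdots$; the two differ by a factor of $1-x$. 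So, as a proof of the statement as written, your argument fails at the final ``routine simplification'', which you assert but do not check.

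The discrepancy is not a slip on your side but a genuine divergence from the paper's proof, which asserts that a path of $\mathcal{A}_2$ can be written $U\alpha D\,U\beta D$ with $\alpha\in\mathcal{A}_0$ and $\beta\in\mathcal{A}_0\setminus\{\epsilon\}$, i.e.\ it forces everything after the first arch to be a \emph{single} arch, giving $A_2=x^2A_0(A_0-1)$. Your condition (``$\beta$ avoids $DUU$ and begins with $UU$'') is strictly weaker, and it appears to be the correct one: for instance $P=UD\,UUDD\,UD$ satisfies the membership condition ($h_\pi(UD)=0\geq h_\pi(UUDDUD)=0$, and recursively), and its unique occurrence of $DUU$ has height $2$, so $P\in\mathcal{A}_2$, yet $P$ is not of the form $U\alpha D\,U\beta D$. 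Direct enumeration at semilength $4$ gives $|\mathcal{A}_2|=4$ (namely $UUDDUUDD$, $UDUUUDDD$, $UDUUDUDD$, $UDUUDDUD$), consistent with your $\frac{x^3}{(1-2x)^2}$ and with the paper's own table entry $a_4=13=8+4+1$, but not with the theorem's formula, which would give $a_4=12$. In short, your decomposition exposes an apparent error in the stated value of $u$ and in the paper's proof of it; but to make your write-up a proof of anything, you must carry out the simplification explicitly and state the value it actually produces rather than claiming it matches the theorem.
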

\begin{proof}
The set $\mathcal{A}_0$ is the set of Dyck paths avoiding the pattern
$DUU$. So, a nonempty path $P\in \mathcal{A}_0$ is of the form
$P=U\alpha D\beta$ where $\alpha\in\mathcal{A}_0$ and $\beta=(UD)^k$
for some $k\geq 0$, which implies that $A_0(x)=1+\frac{x
  A_0(x)}{1-x}$, and thus $A_0(x)=\frac{x-1}{2x-1}$. On the other
hand, the set $\mathcal{A}_1$ is the set of Dyck paths
$P\in\mathcal{A}^{h_\pi,\geq}$ such that $h_{DUU}(P)=1$, which cannot
be possible because the amplitude $r_\pi=2$; so, $A_1(x)=0$. Finally,
the set $\mathcal{A}_2$ is the set of Dyck paths
$P\in\mathcal{A}^{h_\pi,\geq}$ such that $h_{DUU}(P)=2$, which means
that $P$ can be written $U \alpha D U\beta D$ with $\alpha\in
\mathcal{A}_0$ and $\beta\in \mathcal{A}_0\backslash\{\epsilon\}$.

So,
we have the functional equation $A_2(x)=x^2 A_0(x)(A_0(x)-1)$, which
gives the expected result.
\end{proof}

So, we present in the following table the first values of the
sequences corresponding to the cardinality of
$\mathcal{A}^{h_\pi,\geq}$ for patterns $\pi$ of length at most three.

\begin{center}\begin{tabular}{>{\centering\arraybackslash} m{8em}|c|c}
    Statistic $h_\pi$ & $a_n,~1\leq n \leq 9$ & OEIS \\
    \hline
$h$, $U$, $D$, $UD$, $UU$, $DD$, $UDD$, $UUD$ & 1, 2, 4, 9, 21, 51, 127, 323 & \oeis{A001006}  \\
    $DU$&  1, 2, 4, 8, 17, 39, 94, 233, 588&\\
    $UUU$, $DDD$&1, 2, 5, 13, 35, 97, 274, 786, 2282&  \\
    $UDU$, $DUD$&1, 2, 4, 9, 22, 56, 146, 389, 1053& \\
    $DUU$, $DDU$&1, 2, 5, 13, 34, 89, 234, 621, 1669& \\
\end{tabular}
\end{center}

Now, we examine the set of Motzkin paths. For a
given pattern $\pi$, Condition ($R$) implies that for $k\geq r_\pi+1$
we have $$A_k(x)=x^2\cdot A_{k-1}(x)\cdot\sum\limits_{i=0}^k A_i(x)$$
which is a particular case of Equation~(\ref{eq3}) for $r=r_\pi$,
$p(x)=x^2$, $q(x)=0$, $u=A_{r_\pi}(x)$, and $v=A_0(x)$. Then, the
generating function $A(x)$ is the generating function found above in
the context of Dyck paths (see ($6$)) applied on the variable $x^2$ ($x$ is not replaced with $x^2$ in $u$ and $v$).
The following table gives the first values of generating functions $A(x)$ obtained for all patterns of length at most two.

\begin{center}\begin{tabular}{c|c|c }
Statistic $h_\pi$ & $a_n,~1\leq n \leq 9$ & OEIS  \\ \hline
$h$, $U$, $D$ & 1, 2, 3, 6, 11, 22, 43, 87, 176 & \oeis{A026418}\\
$F$ & 1, 2, 4, 8, 17, 36, 78, 170, 374 & \\ 
$UU$, $DD$&1, 2, 4, 9, 20, 46, 107, 253, 604& \\
$UD$&1, 2, 3, 7, 13, 29, 61, 138, 308& \\
$DU$& 1, 2, 4, 9, 20, 46, 107, 252, 599 & \\
$UF$, $FD$& 1, 2, 4, 8, 17, 37, 82, 185, 422 & \\
$DF$, $FU$ &1, 2, 4, 8, 17, 36, 79, 175, 395 & \\
$FF$&1, 2, 4, 9, 20, 47, 111, 268, 653 & \\
\end{tabular}
\end{center}

Looking at the two tables above, one observes that some patterns have
same sequences, for example $DUU$ and $DDU$, $UF$ and $FD$.  We
actually can generalize this as follows.  Let $\pi = \pi_1 \pi_2
\ldots \pi_k$ be a pattern of length $k$ and $C (\pi)$ be the {\em
  reversed complement} of $\pi$, that is $C (\pi) = \overline{\pi}_k
\overline{\pi}_{k-1} \cdots \overline{\pi}_1 $ where
$$ \overline{x} = 
\begin{cases}
F\, \text{ if } x = F, \\
D\, \text{ if } x = U, \\
U\, \text{ if } x = D. \\
\end{cases}
$$
 For instance we have $C (UFDD) = UUFD$.

\begin{theorem}
If $_\pi A(x)$ is the generating function for the set $\mathcal{A}^{h_\pi,\geq}$, then we have $$ _\pi A(x) = {}_{C(\pi)} A (x).$$
\label{th2}
\end{theorem}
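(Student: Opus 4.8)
The plan is to reduce the identity ${}_\pi A(x) = {}_{C(\pi)}A(x)$ to the invariance, under reversed complement, of the only data that Theorem~\ref{th1} feeds into the closed form: the amplitude $r_\pi$ and the two generating functions $v = A_0$ and $u = A_{r_\pi}$ (recall that $A_1 = \cdots = A_{r_\pi-1} = 0$, so $u_r = u$ and $\sum_{i=0}^r u_i - u = v$, while $p,q$ depend only on the ambient class, which is the same for $\pi$ and $C(\pi)$). The central tool is the reversal map $\rho$ sending a path $s_1\cdots s_n$ to $\overline{s_n}\cdots\overline{s_1}$, i.e. reading the path backwards and exchanging $U\leftrightarrow D$ while fixing $F$; geometrically this is the reflection across a vertical line. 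First I would check that $\rho$ is a length-preserving involution of each class $\mathcal{D}$ and $\mathcal{M}$, and that it carries every occurrence of $\pi$ at ordinate $\ell$ to an occurrence of $C(\pi)$ at the same ordinate $\ell$, so that $h_\pi(P) = h_{C(\pi)}(\rho(P))$ for all $P$; in particular $r_\pi = r_{C(\pi)}$. The equality $v^{(\pi)} = v^{(C(\pi))}$ is then immediate, since $\rho$ restricts to a length-preserving bijection between $\pi$-avoiding and $C(\pi)$-avoiding paths, whence $A_0^{(\pi)} = A_0^{(C(\pi))}$.

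The real work lies in proving $u^{(\pi)} = u^{(C(\pi))}$, and the obstacle is precisely that $\mathcal{A}_{r_\pi}$ is carved out by the \emph{asymmetric} first-return condition: $\rho$ turns a first-return decomposition into a last-return one, so it does \emph{not} map $\mathcal{A}_{r_\pi}^{(\pi)}$ onto $\mathcal{A}_{r_\pi}^{(C(\pi))}$ directly. To circumvent this I would establish a structural description of $\mathcal{A}_{r_\pi}$ that decouples the reversal-sensitive gluing order from the reversal-insensitive local content. Writing a path of $\mathcal{A}^{h_\pi,\geq}$ in its prime (first-return) factorization $\Pi_1\cdots\Pi_m$, the recurrence condition ($R$) forces $h_\pi(\Pi_1)\geq\cdots\geq h_\pi(\Pi_m)$; since inside an elevated arch any occurrence already exceeds $r_\pi$, a path with $h_\pi = r_\pi$ must have every $h_\pi(\Pi_i)\in\{0,r_\pi\}$, so it factors as $g_1\cdots g_j\,w$ with $j\geq 1$, each $g_i$ an arch $U\alpha D$ carrying a ground-level (height-$r_\pi$) occurrence with $\alpha$ avoiding $\pi$, and $w$ an arbitrary $\pi$-avoiding path. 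This yields $u = \frac{G}{1-G}\,v$, where $G$ is the generating function of such ground-occurrence arches. Finally $\rho$ sends an arch $U\alpha D$ to the arch $U\rho(\alpha)D$ and preserves the height of every occurrence, so it is a length-preserving bijection between the ground-occurrence arches for $\pi$ and those for $C(\pi)$; hence $G^{(\pi)} = G^{(C(\pi))}$, and combined with the already-matched $v$ this gives $u^{(\pi)} = u^{(C(\pi))}$.

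Having matched $r_\pi$, $u$, and $v$ (with $p,q$ fixed by the class), the two instances of System~(\ref{eq3}) coincide, so Theorem~\ref{th1} returns the same closed form for $A(x)$, i.e. ${}_\pi A(x) = {}_{C(\pi)}A(x)$. I expect the genuine difficulties to be two technical points rather than the skeleton above: first, controlling occurrences that straddle the boundaries between prime pieces (one must verify they sit exactly at height $r_\pi$ and so neither raise $h_\pi$ above $r_\pi$ nor spoil the factorization $g_1\cdots g_j\,w$); and second, extending the definition of $\rho$ and the whole argument to the skew classes $\mathcal{SD}$ and $\mathcal{SM}$, where the left steps $L$ and the additional $U\alpha L$ and $U\alpha LF\gamma$ cases of the first-return decomposition call for a suitably adapted reversal and a more delicate prime factorization.
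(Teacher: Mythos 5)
Your skeleton coincides with the paper's: both arguments reduce the theorem to the three equalities $r_\pi=r_{C(\pi)}$, ${}_\pi A_0={}_{C(\pi)}A_0$ and ${}_\pi A_{r_\pi}={}_{C(\pi)}A_{r_{C(\pi)}}$, and then let Equation~(\ref{eq3}) (equivalently Theorem~\ref{th1}) do the rest; your treatment of $A_0$ via the reversed complement acting on $\pi$-avoiding paths is exactly the paper's, and you correctly identify that the whole difficulty sits in $A_{r_\pi}$, where the asymmetric first-return condition prevents a global application of $C$.

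The gap is in your resolution of that difficulty. The factorization of $\mathcal{A}_{r_\pi}$ as $g_1\cdots g_j w$ with \emph{each} $g_i$ an arch containing a grounded occurrence of $\pi$, and the consequent formula $u=\frac{G}{1-G}\,v$, are false in general: the straddling occurrences you defer to a ``technical point'' are not a perturbation of this picture but can be the \emph{only} occurrences present. Take $\pi=DUU$ (Theorem~\ref{th4}): a height-$r_\pi$ occurrence of $DUU$ must have the lattice point following its $D$ step on the $x$-axis, and inside an arch $U\alpha D$ this point can only be the arch's right endpoint, so no arch ever contains a grounded occurrence, $G=0$, and your formula yields $u=0$ — whereas the paper computes $u=x^3(1-x)/(2x-1)^2\neq 0$, every path of $\mathcal{A}_2$ having its occurrence straddle two consecutive prime components ($P=U\alpha D\,U\beta D$). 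Repairing the factorization means adjoining a terminal block ``$\pi$-avoiding arch followed by $\pi$-avoiding tail, joined by a straddling occurrence,'' and that block is \emph{not} carried to its $C(\pi)$-analogue by $\rho$, since reversal turns arch-then-tail into tail-then-arch; proving its generating function is reversal-invariant is essentially the original problem again. This is precisely why the paper abandons global reversal in favour of a recursive, order-preserving bijection $\phi(U\alpha D\beta)=U C(\alpha) D\,\phi(\beta)$, which reverse-complements only arch interiors and applies $C$ to the whole remaining suffix exactly when the terminal straddling configuration is reached. Without some such device your argument does not close.
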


\begin{proof}
For short we set $\sigma=C(\pi)$. Since the reversed complement
operation preserves the amplitude, we have $r_\pi = r_\sigma$. Now, we
will exhibit a bijection $\phi$ from $\mathcal{A}^{h_\pi,\geq}$ to
$\mathcal{A}^{h_\sigma,\geq}$.  Suppose $\pi \ne F^k, k \ge 1$, if a
path $P\in \mathcal{A}^{h_\pi,\geq}$ does not contain $\pi$, we set
$\phi(P)=C(P)$. It avoids $\sigma$ and belongs to
$\mathcal{A}^{h_\sigma,\geq}$. So, we have $_\pi A_0(x) = {}_\sigma
A_0(x)$. If $\pi = F^k$ for some $k\ge 1$, we have $r_\pi = 0$ and we
also set $\phi(P) = C(P)$. We easily deduce again that $_\pi A_0(x) =
{}_\sigma A_0(x)$.

Now, let us consider the case where $r_\pi \ge 1$ and prove that
g.f. $_\pi A_{r_\pi}$ for pattern $\pi$ is actually equals $_\sigma A_{r_\sigma}$
for pattern $\sigma$.  To do this we consider a path $P\in
\mathcal{A}^{h_\pi,\geq}$ having at least one occurrence of $\pi$ and
such that $h_\pi(P)=r_\pi$.  We can write $P$ either as ($i$) $P=F
\alpha$ or ($ii$) $P=U \alpha D \beta$ where $\alpha,\beta\in
\mathcal{A}^{h_\pi,\geq}$.

Case ($i$). We assume $P=F\alpha$ and $h_\pi(F)\geq
h_\pi(\alpha)$. Since the pattern $\pi$ is not $F^k$ (because
$r_\pi\geq 1$), we necessarily have $h_\pi(F)=0=h_\pi(\alpha)$ which
means that $\alpha$ avoids $\pi$. Therefore, the pattern $\pi$ occurs
once in $P$ and it straddles $F$ and $\alpha$. We deduce that $C(P)$
belongs to $\mathcal{A}^{h_\sigma,\geq}$ and it has at least one
occurrence of $\sigma$ such that $h_\sigma(C(P))=r_\sigma$. So in this
case, we define the function $\phi$ with $\phi(P)=C(P)$.

Case ($ii$). We assume $P=U \alpha D \beta$ and $h_\pi(U\alpha D)\geq
h_\pi(\beta)$. If $h_\pi(U\alpha D)=0$ then we have $h_\pi(\beta)=0$
and using the same argument as previous, $C(P)$ belongs to
$\mathcal{A}^{h_\sigma,\geq}$ and it has exactly one occurrence of
$\sigma$ straddling $U\alpha D$ and $\beta$, and such that
$h_\sigma(C(P))=r_\sigma$. In this subcase, we set $\phi(P)=C(P)$.  If
$h_\pi(U\alpha D)=r_\pi$ then the occurrence of $\pi$ in $U\alpha D$
is at the beginning or at the end (or touches both extremities) of
$U\alpha D$ since it must touch the $x$-axis, but the occurrence does not
straddle $U\alpha D$ and $\beta$. This means that $\alpha$ avoids
$\pi$ and $\beta$ either avoids $\pi$ (i.e. $h_\pi(\beta)=0$, as $\pi
\ne F^k$) or contains $\pi$ with $h_\pi(\beta)=r_\pi$.  In this
subcase, we recursively define $\phi$ as $\phi(P)=U C(\alpha) D \phi(\beta)$. It belongs to $\mathcal{A}^{h_\sigma,\geq}$ and has at
least one occurrence of $\sigma$ such that
$h_\sigma(\phi(P))=r_\sigma$. So, $\phi$ is a one-to-one
correspondence between paths in $\mathcal{A}^{h_\pi,\geq}$ having at
least one occurrence of $\pi$ such that $h_\pi(\phi(P))=r_\pi$ and
paths $\mathcal{A}^{h_\sigma,\geq}$ having at least one occurrence of
$\sigma$ such that $h_\sigma(\phi(P))=r_\sigma$. So, we have $_\pi
A_{r_\pi}(x) = {}_\sigma A_{r_\sigma}(x)$.

Since $_\pi A_{k}(x) = {}_\sigma A_{k}(x) = 0$ for $0 < k < r_\pi$,
finally we have $_\pi A_{k}(x) = {}_\sigma A_{k}(x)$ for $0 \leq k
\leq r_\pi$, and Equation~(\ref{eq3}) implies $_\pi A_{k}(x) =
     {}_\sigma A_{k}(x)$ for $k\geq r_\pi+1$.
\end{proof}

\subsection{Skew Dyck  and  Motzkin paths}
Let us discuss the skew Dyck paths first. For a given pattern $\pi$,
Condition ($R$) implies that for $k\geq r_\pi+1$ we
have $$A_k(x)=x\cdot A_{k-1}(x)\cdot\left(\sum\limits_{i=0}^k
A_i(x)+1\right)$$ which is a particular case of Equation~(\ref{eq3})
for $r=r_\pi$, $p(x)=x$, $q(x)=1$, $u=A_{r_\pi}(x)$, and
$v=A_0(x)$. Then, we deduce the following.

\begin{corollary}
\begin{equation}A(x)=\frac {uv{x}^{2}+{v}^{2}{x}^{2}-{x}^{2}u-xu-{x}^{2}+1-\sqrt{\Delta}}{2{x}^{2} \left( 1+v+u \right) }
\label{eq7}
\end{equation}
with $\Delta={u}^{2}{v}^{2}{x}^{4}+2\,u{v}^{3}{x}^{4}+{v}^{4}{x}^{4}+2\,{u}^{2}v{x}^{4}+6\,u{v}^{2}{x}^{4}+4\,{v}^{3}{x}^{4}-2\,{u}^{2}v{x}^{3}+{u}^{2}{x}^{
4}-2\,u{v}^{2}{x}^{3}+6\,uv{x}^{4}+6\,{v}^{2}{x}^{4}-2\,{x}^{3}{u}^{2}
-4\,uv{x}^{3}+2\,u{x}^{4}+4\,v{x}^{4}-3\,{u}^{2}{x}^{2}-6\,uv{x}^{2}-2
\,u{x}^{3}-2\,{v}^{2}{x}^{2}+{x}^{4}-6\,{x}^{2}u-4\,v{x}^{2}-2\,xu-2\,
{x}^{2}+1.$
\end{corollary}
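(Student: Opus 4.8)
The plan is to apply Theorem~\ref{th1} directly with the specialization $p=x$, $q=1$ already recorded for skew Dyck paths (so $r=r_\pi$, $u=A_{r_\pi}(x)$, $v=A_0(x)$), and then to solve the resulting quadratic for $A(x)$. First I would substitute $p=x$ and $q=1$ into the four expressions of Theorem~\ref{th1}, obtaining $a=x^2v(1+u+v)-xu-u-v$, $b=-x(x+1)(1+u+v)$, $c=-x^2v(1+u+v)-x-xv-1$, and $d=x^2(1+u+v)$. These are routine substitutions.

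Next I would assemble the quadratic of Equation~(\ref{eq5}), namely $d\,A(x)^2+(c-b)\,A(x)-a=0$. The useful intermediate step is to simplify $c-b$; collecting terms gives $c-b=x^2(1+u+v)(1-v)+xu-1$, hence $-(c-b)=uvx^2+v^2x^2-x^2u-xu-x^2+1$, which is precisely the rational (radical-free) part of the numerator in Equation~(\ref{eq7}). Moreover $2d=2x^2(1+v+u)$ is exactly the claimed denominator. By the quadratic formula, $A(x)=\dfrac{-(c-b)\pm\sqrt{(c-b)^2+4da}}{2d}$, so it remains only to identify the discriminant and to fix the sign.

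The main obstacle is checking that $(c-b)^2+4da$ equals the large polynomial $\Delta$ displayed in the statement. This is a direct but bulky expansion in the three indeterminates $x$, $u$, $v$, best verified with a computer algebra system (as the authors do elsewhere); apart from careful bookkeeping it requires no new idea, and comparing coefficient by coefficient against the stated $\Delta$ completes it.

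Finally I would pin down the minus sign before $\sqrt{\Delta}$. Since $v=A_0(x)$ and $u=A_{r_\pi}(x)$ are formal power series (with $v$ having constant term $1$ and $u$ constant term $0$), the denominator $2x^2(1+v+u)$ has a double zero at $x=0$, so $A(x)$ can be a power series only if the numerator also vanishes at $x=0$. Choosing the branch of the square root with $\sqrt{\Delta}\big|_{x=0}=1$ (the constant term of $\Delta$ is $1$), the radical-free part of the numerator likewise tends to $1$ as $x\to 0$, so the minus sign makes the numerator vanish there and cancel the double zero of the denominator, whereas the plus sign would produce a pole. This forces the branch in Equation~(\ref{eq7}), exactly mirroring the Dyck case of Equation~(\ref{eq6}).
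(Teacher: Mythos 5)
Your proposal is correct and follows exactly the route the paper intends: the paper offers no explicit proof of this corollary beyond "we deduce the following" from Theorem~\ref{th1} with $p(x)=x$, $q(x)=1$, and your substitutions, the identification $-(c-b)=uvx^2+v^2x^2-x^2u-xu-x^2+1$, $2d=2x^2(1+u+v)$, $(c-b)^2+4da=\Delta$, and the power-series branch selection are precisely the omitted computation. Nothing further is needed.
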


The following table gives the first values of generating functions
$A(x)$ obtained for all patterns of length at most two. Notice that
none sequences appear in \cite{Sloa}.

\begin{center}
    \begin{tabular}{c|c|c }
Statistic $h_\pi$& $a_n,~1\leq n \leq 9$ & OEIS  \\ \hline
$h$, $U$, $D$, $UU$ , $UD$& 1, 3, 8, 23, 68, 211, 668, 2169, 7145&\\
$L$, $DL$&1, 3, 9, 28, 91, 307, 1062, 3748, 13429& \\ 
$DD$ & 1, 3, 9, 29, 96, 327, 1136, 4014, 14365 & \\
$DU$ & 1, 3, 9, 27, 82, 255, 813, 2655, 8847 & \\
$LD$ & 1, 3, 10, 35, 126, 463, 1728, 6529, 24916 & \\
$LL$ &1, 3, 10, 35, 128, 485, 1890, 7531, 30545 & \\
\end{tabular}
\end{center}

In the context of skew Motzin paths we have
$$A_k(x)=x^2\cdot A_{k-1}(x)\cdot\left(\sum\limits_{i=0}^k A_i(x)+x \cdot A_0(x)+1\right)$$
which is a particular case of Equation~(\ref{eq3}) for $r=r_\pi$,
$p(x)=x^2$, $q(x)=x\cdot A_0(x)+1$, $u=A_{r_\pi}(x)$, which means that
$v=0$ when $r_\pi=0$ and $v=A_0(x)$ otherwise. Then, the generating
function $A(x)$ is the generating function for skew Dyck paths (see
Equation (\ref{eq7})) applied on the variable $x^2$ ($x$ is not
replaced with $x^2$ in $u$ and $v$), and we obtain the following
results for patterns of length at most one.  The sequences do not
appear in \cite{Sloa}.

\begin{center}\begin{tabular}{c|c|c}
Statistic $h_\pi$& $a_n,~1\leq n \leq 9$ & OEIS \\ \hline
$h$, $U$ & 1, 2, 4, 9, 20, 45, 101, 229, 524, 1211, 2820 & \\
$D$ &1, 2, 4, 10, 23, 55, 131, 318, 774, 1899, 4678 & \\
$F$ &1, 2, 5, 11, 27, 64, 157, 383, 946, 2347, 5854 & \\
$L$ &1, 2, 5, 12, 30, 76, 196, 513, 1359, 3639, 9831& \\
\end{tabular}
\end{center} 

\section{Conclusion and research directions}

Except Dyck and Motzkin cases, where we obtain two sequences appearing
in \cite{Sloa} for the pattern $U$, all the others have never been
studied in the literature. So, this work provides a new catalog of
sequences which is a possibly fertile ground in number theory, and
more precisely in the analyse of their modular congruences (see
\cite{Desa} for instance). Another research direction could be the
investigation of the distribution of a given pattern in the set
$\mathcal{A}^{h_\pi,\geq}$. For instance, if $\pi=UU$ in the context
of Dyck paths, then the paths in $\mathcal{A}^{h_\pi,\geq}$ avoiding
the pattern $UDU$ are enumerated by the generalized Catalan numbers
(see \oeis{A004148} in \cite{Sloa}), which also count peak-less
Motzkin paths. It would be interesting to find a one-to-one
correspondence between these objects.

In this paper, we focus on the generating $A(x)$ of
$\mathcal{A}^{h_\pi,\geq}$, but we do not deal with the generating
functions $A_k(x)$ for paths $P\in\mathcal{A}^{h_\pi,\geq}$ such that
$h_\pi(P)=k$ since they can be easily obtained. Observing these
g.f. for small $k$, we have identified some of them that are already
known in OEIS \cite{Sloa}. For instance, the number of $n$-length Dyck
paths $P\in\mathcal{A}^{h_{UU},\geq}$ such that $h_{UU}(P)=2$ is the
classical number of Fibonacci minus one (see \oeis{A000071} in
\cite{Sloa}). Such a result suggests us to look for new bijections with known classes counted by this sequence.

Finally, it would be interesting to extend this work to other classes
of objects counted by generating functions satisfying System
(\ref{eq3}).

\section*{Acknowledgements}

This work was partly supported by the project ANER ARTICO
financed by Bourgogne-Franche-Comté region.

\footnotesize

\end{document}